\newtheorem{theorem}{Theorem}[section]
\newtheorem{lemma}[theorem]{Lemma}
\theoremstyle{definition}
\newtheorem{definition}[theorem]{Definition}
\newtheorem{remark}[theorem]{Remark}
\numberwithin{equation}{section}
\begin{document}
\title[Liouville theorems]{Liouville theorems to system of elliptic differential inequalities on the Heisenberg group}
\author[Yadong Zheng]{Yadong Zheng}
\address{School of Mathematical Sciences and LPMC, Nankai University, 300071
Tianjin, P. R. China}
\email{zhengyd@mail.nankai.edu.cn}

%\thanks{Xu was supported by the Fundamental Research Funds for the Central Universities, SYSU (No.20lgpy154)}

\subjclass[2010]{Primary:  35R03; Secondary: 35R45.}

%\date{}

%\dedicatory{This paper is dedicated to our advisors.}
\keywords{Liouville theorems; System of elliptic differential inequalities;  Heisenberg group.}

\begin{abstract}
In this paper, we establish Liouville theorems for the following system of elliptic differential inequalities
\begin{flalign}
 && &
 \begin{cases}
\Delta_{\mathbb H}u^{m_1}+|\eta|_{\mathbb H}^{\gamma_1}|v|^p\leq0,\\
\Delta_{\mathbb H}v^{m_2}+|\eta|_{\mathbb H}^{\gamma_2}|u|^q\leq0,
 \end{cases}
 &\nonumber
\end{flalign}
on different unbounded open domains of Heisenberg group $\mathbb H$, including the whole space, and half space of $\mathbb H$. Here $p>m_2>0$, $q>m_1>0$.
\end{abstract}

\maketitle

\section{Introduction}
Let $\mathbb H$ be Heisenberg group, which is topologically Euclidean but analytically non-Euclidean. To be precise, $\mathbb H=({\mathbb R}^{2N+1},\circ)$ is the space ${\mathbb R}^{2N+1}$ with the non-commutative law of product
$$\eta'\circ \eta=\left(x+x',y+y',\tau+\tau'+2(x\cdot y'-x'\cdot y)\right),$$
for all $\eta=(x,y,\tau),\eta'=(x',y',\tau')\in{\mathbb R}^N\times{\mathbb R}^N\times{\mathbb R}$, where $\cdot$ denotes the standard scalar product in ${\mathbb R}^N$. For more information on Heisenberg group, see Section \ref{Sec2}.

In this paper, we consider the quasilinear system of elliptic differential inequalities
\begin{equation}\label{equ}
 \begin{cases}
\Delta_{\mathbb H}u^{m_1}+|\eta|_{\mathbb H}^{\gamma_1}|v|^p\leq0,\quad {\rm in}\;\Omega,\\
\Delta_{\mathbb H}v^{m_2}+|\eta|_{\mathbb H}^{\gamma_2}|u|^q\leq0,\quad {\rm in}\;\Omega,
 \end{cases}
\end{equation}
where $p>m_2>0$ and $q>m_1>0$, $|\eta|_{\mathbb H}$ is defined as in (\ref{eta-h}),
 $\Delta_{\mathbb H}$ is the sub-Laplacian on $\mathbb H$ (see Section \ref{Sec2}),
and  $\Omega$ is an unbounded open subset of Heisenberg group $\mathbb H$ taking one of the following three forms
\begin{enumerate}
\item[(1).]{$\Omega_1:=\mathbb H$;}
\item[(2).]{$\Omega_2:=\left\{(x,y,\tau)\in\mathbb H\mid x_1>0\right\}$;}
\item[(3).]{$\Omega_3:=\left\{(x,y,\tau)\in\mathbb H\mid \tau>0\right\}.$}
\end{enumerate}

In the past several decades, more and more attentions has been attracted to the analysis and PDEs on Heisenberg group,
see \cite{BCC, Bir, CPDT, HK, GL-90, GL-92, Gre,JL88, LU, PV00, Ug99, Ugu99}.
Recall the celebrated results from Birindelli, Capuzzo  Dolcetta and Cutr$\grave{\i}$ in \cite{BCC}, they investigated
\begin{equation}\label{eq-ineq}
 \Delta_{\mathbb H}u+|\eta|_{\mathbb H}^{\gamma}u^p\leq0,\quad\mbox{in $\Omega$},
\end{equation}
and proved that
\begin{enumerate}
\item[(a).]{
if $\gamma>-2$, and
$1<p\leq\frac{Q+\gamma}{Q-2},$
then (\ref{eq-ineq}) admits no positive solutions in $\Omega_1$.}
\item[(b).]{if $\gamma>-1$, and $1<p\leq\frac{Q+\gamma}{Q-1}$, then (\ref{eq-ineq}) admits no positive solutions in $\Omega_2$.}

\item[(c).]{if $\gamma>0$, and $1<p\leq \frac{Q+\gamma}{Q}$, then (\ref{eq-ineq}) admits no positive solutions in $\Omega_3$,}
\end{enumerate}
where
\begin{equation}
Q=2N+2,
\nonumber
\end{equation}
and usually is called the homogeneous dimension of $\mathbb H$.

Later in \cite{PV00}, Pohozaev and V$\acute{\rm e}$ron removed the assumption of positiveness of solution, and stuided
\begin{equation}\label{eq-2}
 \Delta_{\mathbb H}u+|\eta|_{\mathbb H}^{\gamma}|u|^p\leq0,\quad \mbox{\rm in $\mathbb H$}.
\end{equation}
They proved that if $\gamma>-2$, and $1<p\leq\frac{Q+\gamma}{Q-2}$,
then (\ref{eq-2}) admits no locally integrable solution $u\in L_{loc}^p(\mathbb H, |\eta|_{\mathbb H}^{\gamma}d\eta)$.

In the same paper \cite{PV00}, Pohozaev and V$\acute{\rm e}$ron  also studied system of (\ref{equ}) under the special case of $m_1=m_2=1$, $\Omega=\mathbb H$, namely
\begin{equation}\label{eq-3}
 \begin{cases}
\Delta_{\mathbb H}u+|\eta|_{\mathbb H}^{\gamma_1}|v|^p\leq0,\quad {\rm in}\;\mathbb H,\\
\Delta_{\mathbb H}v+|\eta|_{\mathbb H}^{\gamma_2}|u|^q\leq0,\quad {\rm in}\;\mathbb H.
 \end{cases}
\end{equation}
They proved that (\ref{eq-3}) admits no solution $(u, v)\in L_{loc}^{q}(\mathbb H,|\eta|_{\mathbb H}^{\gamma_2}d\eta)\times L_{loc}^{p}(\mathbb H,|\eta|_{\mathbb H}^{\gamma_1}d\eta)$ provided that
$\gamma_1, \gamma_2>-2$, and
$$Q\leq2+\min\left\{\frac{2+\gamma_1}{p-1},\frac{2+\gamma_2}{q-1}\right\},
\quad\mbox{for $p>1, q>1$}.$$
Later, Hamidi and Kirane \cite{HK}  showed that (\ref{eq-3}) admits no nontrivial solution, if
$$Q\leq2+\max\left\{\frac{\gamma_1+2+p(\gamma_2+2)}{pq-1},\frac{\gamma_2+2+q(\gamma_1+2)}{pq-1}\right\}.$$
One can easily check that when $p>1, q>1$,
$$\max\left\{\frac{\gamma_1+2+p(\gamma_2+2)}{pq-1},\frac{\gamma_2+2+q(\gamma_1+2)}{pq-1}\right\}\geq
\min\left\{\frac{2+\gamma_1}{p-1},\frac{2+\gamma_2}{q-1}\right\}.$$

Motivated by the above literature, we would like to generalize the study in two respects:
The first is that we aim to remove the positive assumption of  $(u, v)$ to problem (\ref{equ}) with generalized $m_1, m_2>0$; the second is that we will study the nonexistence
results in three different domains $\Omega_1, \Omega_2, \Omega_3$.

For our convenience, throughout the paper, let us denote
\begin{equation}
\Lambda:=2+\max\left\{\frac{m_1\left[m_2(\gamma_1+2)+p(\gamma_2+2)\right]}{pq-m_1m_2},
\frac{m_2\left[m_1(\gamma_2+2)+q(\gamma_1+2)\right]}{pq-m_1m_2}\right\},
\nonumber
\end{equation}
and
\begin{equation}
\alpha:=\max\left\{\frac{p}{p-m_2}, \frac{q}{q-m_1}\right\}.
\nonumber
\end{equation}

\begin{theorem}\label{thm1}
\rm{When $\Omega=\Omega_1$. If $\gamma_1, \gamma_2>-2$, and
\begin{equation}\label{Q-1}
Q\leq\Lambda,
\end{equation}
then (\ref{equ}) admits no nontrivial solution.}
\end{theorem}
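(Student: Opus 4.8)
The plan is to use the nonlinear capacity (rescaled test-function) method, in the spirit of Pohozaev--V\'eron and Mitidieri--Pohozaev, adapted to the sub-Laplacian. Fix $\phi\in C^\infty([0,\infty))$ with $\phi\equiv1$ on $[0,1]$, $\phi\equiv0$ on $[2,\infty)$, $0\le\phi\le1$, and set $\varphi_R(\eta)=\phi(|\eta|_{\mathbb H}/R)$. Since $|\eta|_{\mathbb H}$ is smooth away from the origin and $\varphi_R$ is supported in the annulus $A_R=\{R\le|\eta|_{\mathbb H}\le 2R\}$, the function $\varphi_R$ is a legitimate smooth test function. I would test the first inequality of \eqref{equ} against $\varphi_R^{\ell}$ (with $\ell$ large, to be fixed), integrate over $\mathbb H$, and move both derivatives of $\Delta_{\mathbb H}$ onto the test function via the weak formulation, obtaining
\begin{equation}
\int_{\mathbb H}|\eta|_{\mathbb H}^{\gamma_1}|v|^{p}\varphi_R^{\ell}\,d\eta\le\int_{A_R}|u|^{m_1}\,|\Delta_{\mathbb H}\varphi_R^{\ell}|\,d\eta,
\nonumber
\end{equation}
and symmetrically for the second inequality with the roles of $(u,\gamma_2,m_2,q)$ and $(v,\gamma_1,m_1,p)$ interchanged.

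Write $I_u(R)=\int_{\mathbb H}|\eta|_{\mathbb H}^{\gamma_2}|u|^{q}\varphi_R^{\ell}$ and $I_v(R)=\int_{\mathbb H}|\eta|_{\mathbb H}^{\gamma_1}|v|^{p}\varphi_R^{\ell}$. I would estimate each right-hand side by H\"older's inequality with exponents $q/m_1,\,q/(q-m_1)$ (respectively $p/m_2,\,p/(p-m_2)$), splitting off the weight $|\eta|_{\mathbb H}^{\gamma}$ and a power of $\varphi_R$. The role of the exponent $\ell$ is to keep the factor $\varphi_R^{-\ell m_1/(q-m_1)}|\Delta_{\mathbb H}\varphi_R^{\ell}|^{q/(q-m_1)}$ bounded on $A_R$; since $|\Delta_{\mathbb H}\varphi_R^{\ell}|\le CR^{-2}\varphi_R^{\ell-2}$ there, this forces $\ell\ge 2q/(q-m_1)$ and $\ell\ge 2p/(p-m_2)$, i.e. $\ell\ge 2\alpha$, which is exactly where the quantity $\alpha$ enters. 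Using that $\Delta_{\mathbb H}$ is homogeneous of degree two under the Heisenberg dilations, that the Haar measure scales like $R^{Q}$ (so $|A_R|\sim R^{Q}$), and that $|\eta|_{\mathbb H}^{\gamma}\sim R^{\gamma}$ on $A_R$, the capacity factors reduce to pure powers of $R$, giving
\begin{equation}
I_v(R)\le C\,R^{\theta_1}\,I_u(R)^{m_1/q},\qquad I_u(R)\le C\,R^{\theta_2}\,I_v(R)^{m_2/p},
\nonumber
\end{equation}
with $\theta_1=\frac{(q-m_1)(Q-2)-m_1(\gamma_2+2)}{q}$ and $\theta_2=\frac{(p-m_2)(Q-2)-m_2(\gamma_1+2)}{p}$.

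Because $p>m_2$ and $q>m_1$ force $pq>m_1m_2$, the composite exponent $m_1m_2/(pq)$ is strictly less than one, so substituting the second estimate into the first and absorbing $I_v(R)$ on the left yields $I_v(R)\le C\,R^{\Theta_v}$ with
\begin{equation}
\Theta_v=(Q-2)-\frac{m_1\left[m_2(\gamma_1+2)+p(\gamma_2+2)\right]}{pq-m_1m_2},
\nonumber
\end{equation}
and, by the symmetric chaining, $I_u(R)\le C\,R^{\Theta_u}$ with the companion exponent obtained by swapping the two brackets. The hypothesis $Q\le\Lambda$ is precisely the statement that $Q-2$ does not exceed the larger of the two subtracted quantities, so at least one of $\Theta_u,\Theta_v$ is $\le0$; assume without loss of generality $\Theta_v\le0$. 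If the inequality is strict, letting $R\to\infty$ gives $I_v(R)\to0$, whence $\int_{\mathbb H}|\eta|_{\mathbb H}^{\gamma_1}|v|^{p}\,d\eta=0$, so $v\equiv0$; feeding this into the second inequality of \eqref{equ} forces $|\eta|_{\mathbb H}^{\gamma_2}|u|^{q}\le0$, hence $u\equiv0$.

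The delicate point, and the main obstacle, is the critical case $Q=\Lambda$ with $\Theta_v=0$, where the estimates only give the uniform bound $\sup_R I_v(R)\le C$. Since $I_v(R)\nearrow\int_{\mathbb H}|\eta|_{\mathbb H}^{\gamma_1}|v|^{p}\,d\eta$, this shows $\int_{\mathbb H}|\eta|_{\mathbb H}^{\gamma_1}|v|^{p}\,d\eta<\infty$, and hence the tail $J_v(R):=\int_{A_R}|\eta|_{\mathbb H}^{\gamma_1}|v|^{p}\,d\eta\to0$ as $R\to\infty$. I would then rerun the argument keeping the right-hand integrals localized on the annulus (where they are supported anyway), obtaining $I_v(R)\le C R^{\theta_1}J_u(R)^{m_1/q}$ together with $J_u(R)\le I_u(2R)\le C R^{\theta_2}J_v(2R)^{m_2/p}$. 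Chaining these and using that in the critical case the net power $\theta_1+\tfrac{m_1}{q}\theta_2$ vanishes gives $I_v(R)\le C\,J_v(2R)^{m_1m_2/(pq)}\to0$, so once more $v\equiv0$ and then $u\equiv0$. Beyond this refined critical-case analysis, the steps demanding care are the rigorous justification of the double integration by parts in the weak, sign-changing setting (reading $u^{m_1}$ as $|u|^{m_1-1}u$), and the use of $\gamma_1,\gamma_2>-2$ to keep the weights locally integrable and the capacity integrals convergent near the origin.
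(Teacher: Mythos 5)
Your proposal is correct and follows essentially the same route as the paper: the rescaled test-function (nonlinear capacity) argument with H\"older's inequality applied to each inequality, chaining the two estimates using $m_1m_2/(pq)<1$, and handling the critical case $Q=\Lambda$ by first deducing $|\eta|_{\mathbb H}^{\gamma_1}|v|^p\in L^1$ via monotone convergence and then relocalizing the right-hand side on the annulus so that the vanishing tail forces $I_v(R)\to0$. The paper's first test function $\phi\bigl((|x|^4+|y|^4+\tau^2)/R^4\bigr)^b$ plays exactly the role of your $\phi(|\eta|_{\mathbb H}/R)^{\ell}$, and the exponents $\theta_1,\theta_2,\Theta_u,\Theta_v$ you compute agree with the paper's $\sigma_{I_1},\sigma_{J_1}$.
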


\begin{theorem}\label{thm2}
\rm{When $\Omega=\Omega_2$. If $\gamma_1, \gamma_2>-1$, and
\begin{equation}\label{Q-2}
Q\leq\Lambda-\alpha,
\end{equation}
then (\ref{equ}) admits no nontrivial solution.}
\end{theorem}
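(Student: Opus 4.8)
The plan is to adapt the rescaled test-function (nonlinear capacity) argument behind Theorem \ref{thm1} to the half-space $\Omega_2$, the only genuinely new ingredient being a weight that encodes the characteristic boundary $\{x_1=0\}$. Since the coordinate function $x_1$ is $\Delta_{\mathbb H}$-harmonic, is positive on $\Omega_2$, and vanishes on $\partial\Omega_2$, I would test the two inequalities against $\varphi_R=x_1\,\psi_R^{\ell}$, where $\psi_R(\eta)=\Psi(\delta_{1/R}\eta)$ is a rescaled cutoff ($\Psi\equiv1$ on the gauge ball $B_1$, $\mathrm{supp}\,\Psi\subset B_2$), $\ell$ is taken large, and $\delta_\lambda$ is the natural dilation. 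Because $\varphi_R\ge0$ and $\varphi_R=0$ on $\partial\Omega_2$, Green's identity produces a single boundary integral proportional to $\int_{\partial\Omega_2}u^{m_1}\psi_R^\ell\,d\sigma\ge0$ that appears with the favourable sign and may simply be discarded; this yields $\int_{\Omega_2}|\eta|_{\mathbb H}^{\gamma_1}|v|^p\varphi_R\le\int_{\Omega_2}u^{m_1}|\Delta_{\mathbb H}\varphi_R|$ together with its symmetric counterpart for $v^{m_2}$. The strengthened hypotheses $\gamma_1,\gamma_2>-1$ (rather than $>-2$) are exactly what is needed for the weighted test-function integrals below to remain finite near the boundary.

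Next I would use the harmonicity of $x_1$ to expand $\Delta_{\mathbb H}\varphi_R=x_1\,\Delta_{\mathbb H}\psi_R^\ell+2X_1\psi_R^\ell$, where $X_1$ is the first horizontal vector field. On the gauge annulus $A_R=\{R\le|\eta|_{\mathbb H}\le2R\}$, which carries $\nabla_{\mathbb H}\psi_R$, one has $|\Delta_{\mathbb H}\psi_R^\ell|\lesssim R^{-2}$ and $|X_1\psi_R^\ell|\lesssim R^{-1}$, while $x_1\lesssim R$. Feeding these bounds into the same Hölder coupling as in Theorem \ref{thm1} — estimate $\int u^{m_1}|\Delta_{\mathbb H}\varphi_R|$ by $\bigl(\int|\eta|_{\mathbb H}^{\gamma_2}|u|^q\varphi_R\bigr)^{m_1/q}$ times a pure test-function factor, perform the symmetric step for $v$, and substitute one into the other — reduces everything to bounding $A_\varphi:=\int|\eta|_{\mathbb H}^{\gamma_1}|v|^p\varphi_R$ by a power $R^{\Theta}$. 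In the second-order piece $x_1\Delta_{\mathbb H}\psi_R^\ell$ the weight recombines cleanly: the factor $x_1^{q/(q-m_1)}$ arising from $|x_1\Delta_{\mathbb H}\psi_R^\ell|^{q/(q-m_1)}$ cancels the negative power $x_1^{-m_1/(q-m_1)}$ produced by $\varphi_R^{-m_1/(q-m_1)}$, leaving a single positive power of $x_1$; consequently this piece only reproduces the exponent already obtained in Theorem \ref{thm1}.

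The decisive step, and the place I expect the real difficulty, is the first-order piece $2X_1\psi_R^\ell$. Because it decays like $R^{-1}$ rather than $R^{-2}$, it is this term that governs the final exponent; moreover, paired against the weight $x_1$ it generates the factor $x_1^{-m_1/(q-m_1)}$ (respectively $x_1^{-m_2/(p-m_2)}$), whose integral over the part of $A_R$ adjacent to $\{x_1=0\}$ converges only when $q>2m_1$ (respectively $p>2m_2$). To obtain a statement valid for the full range $q>m_1,\,p>m_2$ one must estimate this term in its worse, boundary-stable form, and it is precisely this worst-case weighted estimate — with Hölder conjugate exponents $q/(q-m_1)$ and $p/(p-m_2)$ — that replaces the decay exponent $2$ by $1$ and costs the extra amount $\alpha=\max\{p/(p-m_2),q/(q-m_1)\}$ relative to Theorem \ref{thm1}. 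Carrying the scaling through, I expect to reach $A_\varphi^{(pq-m_1m_2)/(pq)}\le C\,R^{\frac{pq-m_1m_2}{pq}(Q-\Lambda+\alpha)}$, whose exponent is non-positive exactly under the hypothesis $Q\le\Lambda-\alpha$.

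Finally I would let $R\to\infty$. If $Q<\Lambda-\alpha$ the exponent is strictly negative, so $A_\varphi\to0$ and hence $\int_{\Omega_2}|\eta|_{\mathbb H}^{\gamma_1}|v|^p x_1\,d\eta=0$, forcing $v\equiv0$ and then, through the second inequality, $u\equiv0$. In the critical case $Q=\Lambda-\alpha$ the same bound first yields $\int_{\Omega_2}|\eta|_{\mathbb H}^{\gamma_1}|v|^p x_1\,d\eta<\infty$; restricting the Hölder factors to the annulus $A_R=\mathrm{supp}\,\nabla_{\mathbb H}\psi_R$ and invoking absolute continuity of this finite integral drives the right-hand side to $0$ as $R\to\infty$, and the conclusion follows as before. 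The two branches of the maxima defining $\Lambda$ and $\alpha$ correspond to which of the two inequalities plays the outer role in the substitution, so taking maxima throughout produces the single threshold $Q\le\Lambda-\alpha$.
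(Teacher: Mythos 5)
Your overall strategy (weighted test function vanishing on $\partial\Omega_2$, Hölder coupling of the two inequalities, scaling in $R$, monotone/dominated convergence in the critical case) is the same as the paper's, but there is a genuine gap at exactly the point you flag as ``the decisive step'', and your proposed resolution does not work. With the weight $x_1$ (first power), the first-order term $2X_1\psi_R^\ell$ forces, after Hölder with exponent $\lambda_2=q/(q-m_1)$, the test-function factor
\begin{equation}
\left(\int_{A_R\cap\Omega_2}|\eta|_{\mathbb H}^{(1-\lambda_2)\gamma_2}\,x_1^{\,1-\lambda_2}\,\bigl|\nabla_{\mathbb H}\psi_R^\ell\bigr|^{\lambda_2}\,d\eta\right)^{1/\lambda_2},
\nonumber
\end{equation}
and when $q\le 2m_1$ (i.e.\ $\lambda_2\ge 2$) the integral of $x_1^{1-\lambda_2}$ over the part of the annulus adjacent to $\{x_1=0\}$ is genuinely infinite. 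There is no ``worst-case weighted estimate'' that converts this divergence into a finite bound costing a factor $R^{\alpha-1}$: once the integrand is non-integrable, no choice of conjugate exponents in Hölder rescues it. So your argument as written only covers $q>2m_1$ and $p>2m_2$, and in that range it would actually give the \emph{different} threshold $Q\le\Lambda-1$, not $Q\le\Lambda-\alpha$; your heuristic that the $R^{-1}$ decay of the first-order term is what produces the loss $\alpha$ is also not how the exponent arises.

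The missing idea is to raise the power of the weight: the paper tests against $x_1^{\alpha}\varphi^b$ with $\alpha=\max\{\lambda_1,\lambda_2\}$. Then $|\nabla_{\mathbb H}x_1^\alpha|^{\lambda_j}\,x_1^{-\alpha m_1/(q-m_1)}$ (and its $p$-analogue) recombines to $x_1^{\alpha-\lambda_j}$ with a \emph{nonnegative} exponent, so the integrand is bounded by $R^{\alpha-\lambda_j}$ on the annulus and no boundary singularity ever appears --- this is where the full range $p>m_2$, $q>m_1$ is secured. Harmonicity of the weight is not needed and is in fact given up: $\Delta_{\mathbb H}x_1^\alpha=\alpha(\alpha-1)x_1^{\alpha-2}\ge0$, and this subharmonicity still has the favourable sign, so the corresponding term can be discarded just as you discard the boundary term. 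Finally, the shift from $\Lambda$ to $\Lambda-\alpha$ comes simply from the weight contributing $x_1^{\alpha}\lesssim R^{\alpha}$ (equivalently $x_1^{\alpha-\lambda_j}\lesssim R^{\alpha-\lambda_j}$) in both the second-order and first-order pieces, which both end up with the same exponent $\frac{(1-\lambda_j)\gamma_j+\alpha+Q}{\lambda_j}-2$; i.e.\ the effective replacement is $Q\mapsto Q+\alpha$, giving the stated condition $Q\le\Lambda-\alpha$.
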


\begin{theorem}\label{thm3}
\rm{When $\Omega=\Omega_3$. If $\gamma_1, \gamma_2>0$, and
\begin{equation}\label{Q-3}
Q\leq\Lambda-2\alpha,
\end{equation}
then (\ref{equ}) admits no nontrivial solution.}
\end{theorem}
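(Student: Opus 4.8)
The plan is to use the rescaled test function (nonlinear capacity) method of Mitidieri--Pohozaev, adapted to the sub-Laplacian and to the half-space geometry, and to argue by contradiction: assume $(u,v)$ is a nontrivial solution of \eqref{equ} on $\Omega_3$. For a nonnegative test function $\varphi$ vanishing on $\partial\Omega_3=\{\tau=0\}$, I would multiply each inequality in \eqref{equ} by $\varphi$ and integrate by parts twice, which is legitimate because $\Delta_{\mathbb H}=\sum_i(X_i^2+Y_i^2)$ is a sum of squares of the horizontal fields, obtaining
\[
\int_{\Omega_3}|\eta|_{\mathbb H}^{\gamma_1}|v|^p\,\varphi \le \int_{\Omega_3} u^{m_1}\,\Delta_{\mathbb H}\varphi + (\text{boundary terms}),
\]
together with the analogous inequality in which the roles of $u,v$ and of the indices $1,2$ are interchanged. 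The whole design of $\varphi$ must be arranged so that the boundary contributions on $\{\tau=0\}$ are annihilated.

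Writing $A=\int_{\Omega_3}|\eta|_{\mathbb H}^{\gamma_2}|u|^q\varphi$ and $B=\int_{\Omega_3}|\eta|_{\mathbb H}^{\gamma_1}|v|^p\varphi$, I would then apply Young's inequality to the right-hand sides. Splitting $u^{m_1}|\Delta_{\mathbb H}\varphi|$ with the conjugate exponents $q/m_1$ and $q/(q-m_1)$ (here $q>m_1$ is used) reproduces a small multiple of $A$ plus an error integral $E_1$ depending only on $\varphi$; symmetrically, the second inequality gives $B$ plus an error $E_2$ with exponents $p/m_2,\,p/(p-m_2)$. This yields $B\le A^{m_1/q}E_1^{(q-m_1)/q}$ and $A\le B^{m_2/p}E_2^{(p-m_2)/p}$. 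Substituting one into the other and using $pq>m_1m_2$ (guaranteed by $p>m_2,\,q>m_1$) to collect all powers of $B$ on the left, I obtain the master estimate
\[
B^{\frac{pq-m_1m_2}{pq}}\le C\,E_1^{\frac{q-m_1}{q}}\,E_2^{\frac{m_1(p-m_2)}{pq}},
\]
and its $A$-counterpart.

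The heart of the matter is the choice of $\varphi=\varphi_R$ and the resulting power of $R$. For $\Omega_1$ one takes $\varphi_R=\Phi(|\eta|_{\mathbb H}/R)^s$ with $\Phi$ a standard cutoff and $s$ large; the homogeneities $d(\delta_R\eta)=R^Q\,d\eta$, $|\eta|_{\mathbb H}(\delta_R\eta)=R\,|\eta|_{\mathbb H}(\eta)$ and $\Delta_{\mathbb H}(\delta_R\,\cdot)=R^{-2}\Delta_{\mathbb H}$ turn $E_1,E_2$ into explicit powers of $R$, and the master estimate acquires exponent $\tfrac{pq-m_1m_2}{pq}(Q-\Lambda)$, giving Theorem~\ref{thm1}. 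For $\Omega_3$ the cutoff must in addition vanish on $\{\tau=0\}$; I would build this from the coordinate $\tau$ itself, which is $\Delta_{\mathbb H}$-harmonic ($X_i^2\tau=Y_i^2\tau=0$, so $\Delta_{\mathbb H}\tau=0$) and homogeneous of degree $2$, taking $\varphi_R=\Phi(|\eta|_{\mathbb H}/R)^s\,\tau$. Because $\tau$ is harmonic, $\Delta_{\mathbb H}\varphi_R=\tau\,\Delta_{\mathbb H}\Phi^s+2\,\nabla_{\mathbb H}\Phi^s\cdot\nabla_{\mathbb H}\tau$ is supported only on the annulus $R\le|\eta|_{\mathbb H}\le 2R$; carrying the degree-$2$ weight $\tau$ through the Hölder exponents $p/(p-m_2)$ and $q/(q-m_1)$ shifts the $R$-exponent of each error by a multiple of these ratios, the worst of which is exactly $\alpha$. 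The net effect is to replace $\Lambda$ by $\Lambda-2\alpha$, the factor $2$ being the homogeneous degree of $\tau$, and the hypothesis $\gamma_1,\gamma_2>0$ is what makes the weighted error integrals converge up to the boundary. (For $\Omega_2$ the same scheme with the degree-$1$ harmonic coordinate $x_1$ produces the shift $\alpha$ of Theorem~\ref{thm2}.)

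Finally I would let $R\to\infty$. Under the strict inequality $Q<\Lambda-2\alpha$ the exponent in the master estimate is negative, so $B\to 0$; since $\varphi_R$ increases to the weight $\tau$, which is positive throughout $\Omega_3$, monotone convergence forces $\int_{\Omega_3}|\eta|_{\mathbb H}^{\gamma_1}|v|^p\,\tau=0$, hence $v\equiv0$, and then the second inequality reduces to $|\eta|_{\mathbb H}^{\gamma_2}|u|^q\le0$, i.e. $u\equiv0$, a contradiction. In the critical case $Q=\Lambda-2\alpha$ the master estimate only delivers a uniform bound, so the limiting weighted integrals are finite; I would then refine the Young step by restricting the error integrals to the annulus $R\le|\eta|_{\mathbb H}\le 2R$ and use absolute continuity of the now-finite integrals to send the errors to $0$, again recovering $u\equiv v\equiv0$. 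I expect the main obstacle to be the boundary analysis on $\Omega_3$: ensuring that the integration-by-parts terms on $\{\tau=0\}$ vanish (or carry a favorable sign) for a solution not assumed positive, and performing the near-boundary bookkeeping of the degree-$2$ weight $\tau$ so that the error exponents honestly yield $2\alpha$ rather than the naive $2$ while the weighted integrals stay convergent — which is precisely where the conditions $\gamma_i>0$ and the exponent $\alpha$ come into play.
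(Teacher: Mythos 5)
Your overall scaffold (nonlinear capacity / Mitidieri--Pohozaev, the two H\"older steps with exponents $q/m_1,\ q/(q-m_1)$ and $p/m_2,\ p/(p-m_2)$, the master estimate in $A,B$, and the dominated-convergence refinement in the critical case) is exactly the paper's, but the one step you yourself flag as ``the main obstacle'' is in fact where the proposal fails, and the fix is not in your write-up. With the test function $\psi=\Phi(|\eta|_{\mathbb H}/R)^s\,\tau$, the weight $\tau$ vanishes on $\{\tau=0\}$ but its horizontal gradient $\nabla_{\mathbb H}\tau=(2y,-2x)$ does not, so $\nabla_{\mathbb H}\psi\neq0$ on the boundary and the double integration by parts leaves the term $\int_{\{\tau=0\}}u^{m_1}\,\partial_\nu\psi$. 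For positive $u$ this has a favorable sign (that is the Birindelli--Capuzzo Dolcetta--Cutr\`{\i} setting), but the theorem makes no sign assumption, so this boundary integral is genuinely uncontrolled. The paper's remedy is to take the weight $\tau^{\alpha}$ with $\alpha=\max\{p/(p-m_2),\,q/(q-m_1)\}>1$: then both $\psi=\tau^{\alpha}\varphi^{b}$ and $\nabla_{\mathbb H}\psi$ vanish on $\{\tau=0\}$ (no boundary terms at all), $\tau^{\alpha}$ is $\Delta_{\mathbb H}$-subharmonic rather than harmonic (which still has the right sign), and the H\"older error integrals carry the \emph{nonnegative} power $\tau^{\alpha-\lambda_j}$, which is what keeps them finite and of the right order near the boundary.

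This choice also exposes an arithmetic error in your exponent bookkeeping. With the weight $\tau^{1}$, the error integral in each H\"older step carries $\tau$ to the first power, i.e.\ a factor $R^{2}$ before taking the $1/\lambda_j$ root, and tracing this through the master estimate replaces $Q$ by $Q+2$, not $Q+2\alpha$; your claim that ``the worst of these ratios is exactly $\alpha$'' would thus yield the condition $Q\le\Lambda-2$, which is strictly stronger than the theorem and is not what your construction delivers (nor can it, because of the boundary term). The shift $2\alpha$ comes only from the weight $\tau^{\alpha}$ itself, which scales as $R^{2\alpha}$ on the annulus and enters every integral ($K_{3,j}$ and $L_{3,j}$ alike) with total homogeneity $2\alpha$, producing $\sigma_{I_3},\sigma_{J_3}$ with $Q$ replaced by $Q+2\alpha$ and hence the hypothesis $Q\le\Lambda-2\alpha$. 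So the missing idea is precisely the exponent $\alpha$ on the boundary-defining function; without it the proof does not close.
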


\begin{remark}
\rm{ Theorem \ref{thm2} and \ref{thm3} hold true respectively for any half-spaces
taking the forms of $$\left\{\eta\in \mathbb H: \sum_{i=1}^Na_ix_i+b_iy_i+d>0, \mbox{for
$(a, b)\in \mathbb{R}^N\times\mathbb{R}^N\setminus \{0\}, d\in \mathbb{R}$}\right\},$$
and
$$\left\{\eta\in \mathbb H: \sum_{i=1}^Na_ix_i+b_iy_i+ct+d>0, \mbox{for
$a, b\in \mathbb{R}^N, c\in\mathbb{R}\setminus \{0\}, d\in \mathbb{R}$}\right\}.$$
}
\end{remark}

The paper is organized as follows: In Section \ref{Sec2}, we prepare some preliminaries. In Section \ref{Sec3}, we give the proof of Theorem
 \ref{thm1} by applying three different test functions. Section \ref{Sec4} is devoted to the proof of Theorems \ref{thm2}-\ref{thm3}.

Throughout the paper, we denote by $c_1, c_2,\cdots, C_1, C_2\cdots$ some positive constants, which may vary from line to line. And $f\lesssim h$ means that $f\leq Ch$ for some constant $C>0$.

\section{Preliminaries}\label{Sec2}

The sub-Laplacian $\Delta_{\mathbb H}$ on $\mathbb H$ is defined, from
the vector fields
$$X_i=\partial_{x_i}+2y_i\partial_\tau,\quad
Y_i=\partial_{y_i}-2x_i\partial_\tau,\quad(i=1,\cdot\cdot\cdot N),$$
by
$$\Delta_{\mathbb H}=\sum_{i=1}^N\left(X_i^2+Y_i^2\right)=\sum_{i=1}^N\left[
\partial_{x_i}^2+\partial_{y_i}^2+4y_i\partial_{x_i\tau}^2-
4x_i\partial_{y_i\tau}^2+4(x_i^2+y_i^2)\partial_\tau^2\right].$$
For a a function $f:{\mathbb H}\rightarrow{\mathbb R}$, the horizontal gradient of $f$ is defined as
$$\nabla_{\mathbb H}f=\left(X_1f,\cdot\cdot\cdot,X_Nf,Y_1f,\cdot\cdot\cdot,Y_Nf\right)
=\sum_{i=1}^N\left[\left(X_if\right)X_i+\left
(Y_if\right)Y_i\right].$$
Let us define the norm of $\eta\in\mathbb H$ by
\begin{equation}\label{eta-h}
\left|\eta\right|_{\mathbb H}=\left(\left(|x|^2+|y|^2\right)^2+\tau^2\right)^{\frac{1}
{4}},
\end{equation}
which is homogeneous of degree $1$ with respect to the dilations $\delta_\lambda:(x,y,\tau)\mapsto(\lambda x,\lambda y,\lambda^2\tau)$
for $\lambda>0$.
And Heisenberg distance between $\eta$ and $h$ on $\mathbb H$ is defined by
$$d_{\mathbb H}(\eta,h)=\left|\eta^{-1}\circ h\right|_{\mathbb H},$$
where $\eta^{-1}=-\eta$.

Let us define the Heisenberg ball of radius $R$ centered at $\eta$ be the set
$$B_{\mathbb H}(\eta,R)=\left\{h\in{\mathbb H}:d_{\mathbb H}(\eta,h)<R\right\},$$
it follows that
$$\left|B_{\mathbb H}(\eta,R)\right|=\left|B_{\mathbb H}(0,R)\right|=\left|B_{\mathbb H}(0,1)\right|R^Q,$$
where $|B_{\mathbb H}(0,1)|$ is the volume of the unit Heisenberg ball under Haar measure, which is equivalent to $(2N+1)$-dimensional Lebesgue measure of $\mathbb R^{2N+1}$, and $Q=2N+2$ is called the homogeneous dimension of $\mathbb H$. For more details concerning the Heisenberg group, one can refer to books as \cite{CPDT,IV}, survey papers as \cite{Fol,GL-90,Gre} and the references therein.

Define
$$W_{loc}^{1,p}(\Omega)=\left\{u:\Omega\rightarrow\mathbb R\mid u, \nabla_{\mathbb H}u\in L_{loc}^p(\Omega)\right\},$$
and let $W_c^{1,p}(\Omega)$ be the subspace of $W_{loc}^{1,p}(\Omega)$ of functions with compact support.

\begin{definition}
\rm A pair $(u,v)$ is called a weak solution to system (\ref{equ}) if
$(u,v) \in W_{loc}^{1,q}\left(\Omega\right)\times W_{loc}^{1,p}\left( \Omega\right)$, and
the following inequalities
\begin{align}\label{sol-1}
\int_{\Omega}|\eta|_{\mathbb H}^{\gamma_1}|v|^p\psi d\eta\leq-\int_{\Omega}\psi\Delta_{\mathbb H}u^{m_1} d\eta,
\end{align}
and
\begin{align}\label{sol-2}
\int_{\Omega}|\eta|_{\mathbb H}^{\gamma_2}|u|^q\psi d\eta\leq-\int_{\Omega}\psi\Delta_{\mathbb H}v^{m_2} d\eta,
\end{align}
are valid for any  $0\leq \psi\in W_{c}^{1,q}\left(\Omega\right)\cap W_{c}^{1,p}\left(\Omega\right)$.
\end{definition}

Let
\begin{equation}
D_R:=B_{\mathbb H}(0,2kR),\quad\mbox{for $k\geq1$.}
\nonumber
\end{equation}
Define
\begin{eqnarray}
D_i:=\Omega_i\cap D_R,\quad\mbox{for $i=1, 2, 3$,}
\nonumber
\end{eqnarray}
and
\begin{equation}
f_1:=1,\quad f_2:=x_1^\alpha,\quad f_3:=\tau^\alpha.
\nonumber
\end{equation}

\begin{lemma}
\rm Assume that $(u,v)$ is a solution to (\ref{equ}).
 Let $\varphi\in W_{c}^{1,q}\left(\Omega\right)\cap W_{c}^{1,p}\left(\Omega\right)$ satisfying that $0\leq\varphi\leq1$ and ${\rm supp}\varphi\subset D_R$, we have for $i=1,2,3$, and $j=1,2$,
 \begin{equation}\label{est-IJ}
 \begin{cases}
  I_i^{1-\frac{m_1m_2}{pq}}\lesssim\left(K_{i,1}+ L_{i,1}\right)^{\frac{m_1}{q}}\left(K_{i,2}+  L_{i,2}\right),\\
 J_i^{1-\frac{m_1m_2}{pq}}\lesssim \left(K_{i,1}+ L_{i,1}\right)\left(K_{i,2}+ L_{i,2}\right)
^{\frac{m_2}{p}},
 \end{cases}
\end{equation}
 where
 \begin{flalign}
&& &
I_i:=\int_{D_i}|\eta|_{\mathbb H}^{\gamma_1}|v|^pf_i\varphi^bd\eta,
\quad J_i:=\int_{D_i}|\eta|_{\mathbb H}^{\gamma_2}|u|^qf_i\varphi^bd\eta,
&\nonumber\\&& &
K_{i,j}:=\left(\int_{D_i}|\eta|_{\mathbb H}^{(1-\lambda_j)\gamma_j}f_i\left|\Delta_{\mathbb
H}\varphi\right|^{\lambda_j} d\eta\right)^{\frac{1}{\lambda_j}},
&\nonumber\\&& &
L_{i,j}:=\left(\int_{D_i}|\eta|_{\mathbb H}^{(1-\lambda_j)\gamma_j}f_i^{1-\lambda_j}
\left|\nabla_{\mathbb H}f_i\right|^{\lambda_j}
\left|\nabla_{\mathbb H}\varphi\right|^{\lambda_j}
d\eta\right)^{\frac{1}{\lambda_j}},
&\nonumber\\&& &
\lambda_1:=\frac{p}{p-m_2},\quad\lambda_2:=\frac{q}{q-m_1}.
&\nonumber
\end{flalign}
\end{lemma}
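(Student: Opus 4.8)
The plan is to run the nonlinear capacity (test function) method, testing each inequality of \eqref{equ} against $\psi=f_i\varphi^b$ for a suitably large exponent $b$, and then to couple the two resulting one-sided estimates. First I would check that $\psi=f_i\varphi^b$ is admissible in \eqref{sol-1}--\eqref{sol-2}: it is nonnegative, compactly supported in $D_R$, and for $i=2,3$ the factor $f_i$ vanishes on $\partial\Omega_i$. Here the fact $\alpha>1$ (which holds since $p>m_2$ and $q>m_1$) guarantees that $x_1^\alpha$ and $\tau^\alpha$ are $C^1$ up to the boundary with vanishing trace, so that no boundary terms appear when integrating by parts.

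Next, testing \eqref{sol-1} with $f_i\varphi^b$ and integrating by parts twice moves the sub-Laplacian onto the test function,
\[
I_i \;\le\; -\int_{D_i} u^{m_1}\,\Delta_{\mathbb H}\!\left(f_i\varphi^b\right)d\eta,
\]
and I would expand $\Delta_{\mathbb H}(f_i\varphi^b)=\varphi^b\Delta_{\mathbb H}f_i+2\nabla_{\mathbb H}f_i\cdot\nabla_{\mathbb H}\varphi^b+f_i\Delta_{\mathbb H}\varphi^b$. The first key point is that $\varphi^b\Delta_{\mathbb H}f_i$ carries a favourable sign: a direct computation gives $\Delta_{\mathbb H}x_1^\alpha=\alpha(\alpha-1)x_1^{\alpha-2}\ge0$ on $\Omega_2$ and $\Delta_{\mathbb H}\tau^\alpha=4\alpha(\alpha-1)(|x|^2+|y|^2)\tau^{\alpha-2}\ge0$ on $\Omega_3$ (and $\Delta_{\mathbb H}f_1=0$), so with $u^{m_1}\ge0$ this contribution is nonpositive and may be discarded. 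I am then left with the two genuine test-function terms $2\nabla_{\mathbb H}f_i\cdot\nabla_{\mathbb H}\varphi^b$ and $f_i\Delta_{\mathbb H}\varphi^b$; using $\nabla_{\mathbb H}\varphi^b=b\varphi^{b-1}\nabla_{\mathbb H}\varphi$ and $\Delta_{\mathbb H}\varphi^b=b\varphi^{b-1}\Delta_{\mathbb H}\varphi+b(b-1)\varphi^{b-2}|\nabla_{\mathbb H}\varphi|^2$, and taking $b\ge2\max\{\lambda_1,\lambda_2\}$, these are controlled by a constant multiple of $f_i\varphi^{b-1}|\Delta_{\mathbb H}\varphi|+|\nabla_{\mathbb H}f_i|\,\varphi^{b-1}|\nabla_{\mathbb H}\varphi|$.

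The second key point is the Hölder splitting. Writing $|u|^{m_1}=\bigl(|\eta|_{\mathbb H}^{\gamma_2}|u|^q f_i\varphi^b\bigr)^{m_1/q}\bigl(|\eta|_{\mathbb H}^{\gamma_2}f_i\varphi^b\bigr)^{-m_1/q}$ and applying Hölder with conjugate exponents $q/m_1$ and $\lambda_2=q/(q-m_1)$, the first factor integrates to $J_i^{m_1/q}$, while the exponent identity $(1-\lambda_2)\gamma_2=-\tfrac{m_1}{q-m_1}\gamma_2$ makes the second factor reproduce exactly the integrands defining $K_{i,2}$ and $L_{i,2}$ (the powers of $\varphi$ collapse to $\varphi^{b-\lambda_2}\le1$). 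This yields $I_i\lesssim J_i^{m_1/q}(K_{i,2}+L_{i,2})$, and the symmetric computation on \eqref{sol-2}, now pairing $|v|^{m_2}$ against $|v|^p$ with exponents $p/m_2$ and $\lambda_1$, gives $J_i\lesssim I_i^{m_2/p}(K_{i,1}+L_{i,1})$.

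Finally I would decouple: substituting the second estimate into the first gives $I_i\lesssim I_i^{m_1m_2/(pq)}(K_{i,1}+L_{i,1})^{m_1/q}(K_{i,2}+L_{i,2})$, and dividing by $I_i^{m_1m_2/(pq)}$ (harmless when $I_i<\infty$, the only case of interest) produces the first inequality of \eqref{est-IJ}; substituting in the opposite order produces the second. The step I expect to be the main obstacle is the bookkeeping in the second paragraph: establishing the sign of $\Delta_{\mathbb H}f_i$ and, above all, controlling the lower-order term $\varphi^{b-2}|\nabla_{\mathbb H}\varphi|^2$ arising from $\Delta_{\mathbb H}\varphi^b$ so that it is genuinely subsumed into the $K_{i,j}$ integrals rather than producing a spurious $|\nabla_{\mathbb H}\varphi|^{2\lambda_j}$ weight. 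This is what forces the choice of a large power $b$ together with a cutoff $\varphi$ chosen (in the proofs of the theorems) so that, on ${\rm supp}\,|\nabla_{\mathbb H}\varphi|$, the quantity $|\nabla_{\mathbb H}\varphi|^2$ is dominated by $\varphi\,|\Delta_{\mathbb H}\varphi|$.
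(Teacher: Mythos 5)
Your overall strategy coincides with the paper's: test \eqref{sol-1}--\eqref{sol-2} with $\psi_i=f_i\varphi^b$, discard $\varphi^b\Delta_{\mathbb H}f_i$ by sign, split by H\"older with exponents $q/m_1,\lambda_2$ (resp.\ $p/m_2,\lambda_1$) so that $\varphi^{b-\lambda_j}\le1$ collapses for $b$ large, and then decouple the two one-sided estimates $I_i\lesssim J_i^{m_1/q}(K_{i,2}+L_{i,2})$ and $J_i\lesssim I_i^{m_2/p}(K_{i,1}+L_{i,1})$. All of that matches. The one genuine gap is the step you yourself flag as the main obstacle: the term $b(b-1)\varphi^{b-2}\left|\nabla_{\mathbb H}\varphi\right|^2$ produced by $\Delta_{\mathbb H}\varphi^b$. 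Your proposed resolution --- choose the cutoff so that $\left|\nabla_{\mathbb H}\varphi\right|^2\lesssim\varphi\left|\Delta_{\mathbb H}\varphi\right|$ on ${\rm supp}\,\nabla_{\mathbb H}\varphi$ --- is not available: the lemma is asserted for an \emph{arbitrary} admissible $\varphi$ with $0\le\varphi\le1$ and ${\rm supp}\,\varphi\subset D_R$, so no extra structure may be imposed at this stage; moreover the cutoffs actually used later in the paper violate the condition (for the piecewise-linear-in-$\rho$ factor $\xi_k$ one has $\xi_k\left|\Delta_{\mathbb H}\xi_k\right|\to0$ as $\rho\to2kR$ while $\left|\nabla_{\mathbb H}\xi_k\right|^2=(kR)^{-2}\left|\nabla_{\mathbb H}\rho\right|^2$ does not), and for $i=1$ there is no $\left|\nabla_{\mathbb H}f_i\right|$ term to absorb anything into. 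So the step as you describe it would fail.

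The correct treatment is the same sign observation you already used to discard $\varphi^b\Delta_{\mathbb H}f_i$: since $f_i\ge0$ and $b(b-1)\varphi^{b-2}\left|\nabla_{\mathbb H}\varphi\right|^2\ge0$, one has the pointwise lower bound $\Delta_{\mathbb H}\left(f_i\varphi^b\right)\ge bf_i\varphi^{b-1}\Delta_{\mathbb H}\varphi+2b\varphi^{b-1}\nabla_{\mathbb H}f_i\cdot\nabla_{\mathbb H}\varphi$, and because this quantity is paired with $-u^{m_1}\le0$ (resp.\ $-v^{m_2}\le0$) in \eqref{es-1}--\eqref{es-2}, the square-gradient term is simply dropped, never estimated. (Like your discarding of $\varphi^b\Delta_{\mathbb H}f_i$, this uses $u^{m_1},v^{m_2}\ge0$; the paper makes the same implicit assumption.) With that single correction the rest of your argument goes through verbatim and reproduces the paper's proof.
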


\begin{proof}\rm

Let $$\psi_i=f_i\varphi^b,\quad (i=1,2,3),$$
where $b>1$ is a large enough constant. Note that on $\partial D_i$,
$$\psi_i=0,$$ and $$\nabla_{\mathbb H}\psi_i=bf_i\varphi^{b-1}
\nabla_{\mathbb H}\varphi+\varphi^b\nabla_{\mathbb H}f_i=0,$$
with $$\nabla_{\mathbb H}f_1=0,\quad\nabla_{\mathbb H}f_2=\alpha x_1^{\alpha-1}\nabla_{\mathbb H}x_1,\quad\nabla_{\mathbb H}f_3=\alpha \tau^{\alpha-1}\nabla_{\mathbb H}\tau,$$and $$\nabla_{\mathbb H}x_1=(1,0,\cdot\cdot\cdot,0),\quad\nabla_{\mathbb H}\tau=2(y,x).$$

Substituting $\psi=\psi_i=f_i\varphi^b$ into (\ref{sol-1}) and (\ref{sol-2}), we obtain
\begin{align}\label{es-1}
\int_{D_i}|\eta|_{\mathbb H}^{\gamma_1}|v|^pf_i\varphi^b d\eta\leq&
-\int_{D_i}u^{m_1}\Delta_{\mathbb H}\left(f_i\varphi^b\right) d\eta,
\end{align}
and
\begin{align}\label{es-2}
\int_{D_i}|\eta|_{\mathbb H}^{\gamma_2}|u|^qf_i\varphi^b d\eta\leq&
-\int_{D_i}v^{m_2}\Delta_{\mathbb H}\left(f_i\varphi^b\right)d\eta.
\end{align}
 Note also that
$$\Delta_{\mathbb H}f_1=0,\quad
\Delta_{\mathbb H}f_2=\alpha(\alpha-1)x_1^{\alpha-2}\geq0,\quad
\Delta_{\mathbb H}f_3=4\alpha(\alpha-1)(x^2+y^2)\tau^{\alpha-2}\geq0,$$
and
$$\Delta_{\mathbb
H}\varphi^b=b(b-1)\varphi^{b-2}\left|\nabla_{\mathbb
H}\varphi\right|^2+b\varphi^{b-1}\Delta_{\mathbb
H}\varphi\geq
b\varphi^{b-1}\Delta_{\mathbb H}\varphi.$$
It follows that
\begin{align}\label{es-3}
\Delta_{\mathbb H}\left(f_i\varphi^b\right)
&= f_i\Delta_{\mathbb
H}\varphi^b+2\nabla_{\mathbb
H}f_i\cdot\nabla_{\mathbb H}\varphi^b+\varphi^b\Delta_{\mathbb H}f_i
\nonumber\\
&\geq bf_i\varphi^{b-1}
\Delta_{\mathbb H}\varphi+2b
\varphi^{b-1}\nabla_{\mathbb
H}f_i\cdot
\nabla_{\mathbb H}\varphi.
\end{align}
Combining (\ref{es-1}) with (\ref{es-3}), we obtain
\begin{align}
\int_{D_i}|\eta|_{\mathbb H}^{\gamma_1}|v|^pf_i\varphi^b d\eta\leq\;&b
\int_{D_i}|u|^{m_1}f_i\varphi^{b-1}
\left|\Delta_{\mathbb H}\varphi\right| d\eta\nonumber\\  &
+2b\int_{D_i}|u|^{m_1}\varphi^{b-1}\left|\nabla_{\mathbb
H}f_i\right|\left|
\nabla_{\mathbb H}\varphi\right|d\eta.\nonumber
\end{align}
Applying H$\ddot{\rm o}$lder's inequality, we arrive
\begin{align*}
&\int_{D_i}|\eta|_{\mathbb H}^{\gamma_1}|v|^pf_i\varphi^b d\eta
\nonumber\\
\lesssim&\left(\int_{D_i\cap{\rm supp \left(\nabla_{\mathbb H}\varphi\right)}}|\eta|_{\mathbb H}^{\gamma_2}|u|^qf_i\varphi^b d\eta\right)^{\frac{m_1}{q}}
\left\{\left(\int_{D_i}|\eta|_{\mathbb H}^{-\frac{m_1\gamma_2}{q-m_1}}f_i\varphi^{b-\frac{q}{q-m_1}}\left|\Delta_{\mathbb H}\varphi\right|^{\frac{q}{q-m_1}}d\eta\right)^{\frac{q-m_1}{q}}\right.
\nonumber\\ &\left.
+
\left(\int_{D_i}|\eta|_{\mathbb H}^{-\frac{m_1\gamma_2}{q-m_1}}f_i^{-\frac{m_1}{q-m_1}}\varphi^{b-\frac{q}{q-m_1}
}\left|\nabla_{\mathbb H}f_i\right|^{\frac{q}{q-m_1}}\left|\nabla_{\mathbb H}\varphi\right|^{\frac{q}{q-m_1}}d\eta\right)^{\frac{q-m_1}{q}}\right\}.
\nonumber
\end{align*}
Since $0\leq\varphi\leq1$, we can chose $b$ large enough such that
\begin{align}\label{es-4}
&\int_{D_i}|\eta|_{\mathbb H}^{\gamma_1}|v|^pf_i\varphi^b d\eta
\nonumber\\
\lesssim&\left(\int_{D_i\cap{\rm supp \left(\nabla_{\mathbb H}\varphi\right)}}|\eta|_{\mathbb H}^{\gamma_2}|u|^qf_i\varphi^b d\eta\right)^{\frac{m_1}{q}}
\left\{\left(\int_{D_i}|\eta|_{\mathbb H}^{-\frac{m_1\gamma_2}{q-m_1}}f_i\left|\Delta_{\mathbb H}\varphi\right|^{\frac{q}{q-m_1}}d\eta\right)^{\frac{q-m_1}{q}}\right.
\nonumber\\ &\left.
+
\left(\int_{D_i}|\eta|_{\mathbb H}^{-\frac{m_1\gamma_2}{q-m_1}}f_i^{-\frac{m_1}{q-m_1}}\left|\nabla_{\mathbb H}f_i\right|^{\frac{q}{q-m_1}}\left|\nabla_{\mathbb H}\varphi\right|^{\frac{q}{q-m_1}}d\eta\right)^{\frac{q-m_1}{q}}\right\}.
\end{align}
Similarly,
\begin{align}\label{es-5}
&\int_{D_i}|\eta|_{\mathbb H}^{\gamma_2}|u|^qf_i\varphi^b d\eta
\nonumber\\
\lesssim&\left(\int_{D_i\cap{\rm supp \left(\nabla_{\mathbb H}\varphi\right)}}|\eta|_{\mathbb H}^{\gamma_1}|v|^pf_i\varphi^b d\eta\right)^{\frac{m_2}{p}}
\left\{\left(\int_{D_i}|\eta|_{\mathbb H}^{-\frac{m_2\gamma_1}{p-m_2}}f_i\left|\Delta_{\mathbb H}\varphi\right|^{\frac{p}{p-m_2}}d\eta\right)^{\frac{p-m_2}{p}}\right.
\nonumber\\ &\left.
+
\left(\int_{D_i}|\eta|_{\mathbb H}^{-\frac{m_2\gamma_1}{p-m_2}}f_i^{-\frac{m_2}{p-m_2}}\left|\nabla_{\mathbb H}f_i\right|^{\frac{p}{p-m_2}}\left|\nabla_{\mathbb H}\varphi\right|^{\frac{p}{p-m_2}}d\eta\right)^{\frac{p-m_2}{p}}\right\}.
\end{align}
%For simplicity, we put
%\begin{flalign}
%&& &
%I_i:=\int_{D_i}v^pf_i\varphi^bdg,
%&\nonumber\\&& &
%J_i:=\int_{D_i}u^qf_i\varphi^bdg,
%&\nonumber\\&& &
%K_i(\lambda):=\left(\int_{D_i}f_i\left|\Delta_{\mathbb
%H}\varphi\right|^\lambda dg\right)^{\frac{1}{\lambda}},&\nonumber\\&& &
%L_i(\lambda):=\left(\int_{D_i}f_i^{1-\lambda}
%\left|\nabla_{\mathbb H}f_i\right|^\lambda
%\left|\nabla_{\mathbb H}\varphi\right|^\lambda
%dg\right)^{\frac{1}{\lambda}},
%&\nonumber\\&& &
%\lambda_1:=\frac{q}{q-m_1},\quad\lambda_2:=\frac{p}{p-m_2}.
%&\nonumber
%\end{flalign}
Combining (\ref{es-4}) with (\ref{es-5}), we obtain
\begin{equation*}
 \begin{cases}
 I_i^{1-\frac{m_1m_2}{pq}}\lesssim\left(K_{i,1}+ L_{i,1}\right)^{\frac{m_1}{q}}\left(K_{i,2}+  L_{i,2}\right),\\
 J_i^{1-\frac{m_1m_2}{pq}}\lesssim \left(K_{i,1}+ L_{i,1}\right)\left(K_{i,2}+ L_{i,2}\right)
^{\frac{m_2}{p}},
 \end{cases}
\end{equation*}
which completes the proof.
\end{proof}

\section{Proof of Theorem \ref{thm1}}\label{Sec3}

\rm{\begin{proof}[Proof of Theorem \ref{thm1}]
When $\Omega=\Omega_1$, we have for $j=1,2$,
$$K_{1,j}=\left(\int_{D_1}|\eta|_{\mathbb H}^{(1-\lambda_j)\gamma_j}\left|\Delta_{\mathbb
H}\varphi\right|^{\lambda_j} d\eta\right)^{\frac{1}{\lambda_j}},$$
and
$$L_{1,j}=0.$$
Then (\ref{est-IJ}) becomes
\begin{equation}\label{1-1}
 \begin{cases}
 I_1^{1-\frac{m_1m_2}{pq}}\lesssim \left(K_{1,1}\right)^{\frac{m_1}{q}}K_{1,2},\\
 J_1^{1-\frac{m_1m_2}{pq}}\lesssim K_{1,1}\left(K_{1,2}\right)
^{\frac{m_2}{p}}.
 \end{cases}
\end{equation}
In what follows, we would like to use (\ref{1-1}) with three different types of test functions to prove $(u,v)\equiv(0,0)$ in $\mathbb H$, respectively.

$\bullet$ One is
\begin{equation}\label{1-2}
\varphi(\eta)=\varphi(x,y,\tau):=\phi\left(\frac{|x|^4+|y|^4+\tau^2}{R^4}
\right),
\end{equation}
where $\phi\in C^\infty[0,\infty)$ is a nonnegative function satisfying
\begin{equation}
\phi(t)=1\;{\rm on}\;[0,1];\quad \phi(t)=0\;{\rm on}\;[2,\infty);\quad
|\phi'|\leq C<\infty.\nonumber
\end{equation}
Set $r=\frac{|x|^4+|y|^4+\tau^2}{R^4}$. Note that ${\rm supp}\left(\nabla_{\mathbb H}\varphi\right)$ is a subset of
\begin{equation}
\Sigma_R=\left\{\eta=(x,y,\tau)\in\mathbb H\mid R^4\leq |x|^4+|y|^4+\tau^2\leq2R^4\right\}.\nonumber
\end{equation}
Direct calculation yields that
\begin{align}
\Delta_{\mathbb H}\varphi=\;&\sum_{i=1}^N\left[
\partial_{x_i}^2\varphi+\partial_{y_i}^2\varphi+4y_i\partial_{x_i\tau}
^2\varphi-4x_i\partial_{y_i\tau}^2\varphi+4(x_i^2+y_i^2)
\partial_\tau^2\varphi\right]
\nonumber\\
=\;&16R^{-8}\phi''(r)\left[|x|^6+|y|^6+\tau^2(|x|^2+|y|^2)
+2\tau x\cdot y(|x|^2-|y|^2)\right]
\nonumber\\&
+4(4+N)R^{-4}\phi'(r)(|x|^2+|y|^2)
.\nonumber
\end{align}
Thus,
\begin{equation}\label{1-3}
\left|\Delta_{\mathbb H}\varphi\right|\lesssim R^{-2}.
\end{equation}
It follows that
\begin{align}\label{1-4}
K_{1,j}&=\left(\int_{\Sigma_R}|\eta|_{\mathbb H}^{(1-\lambda_j)\gamma_j}\left|\Delta_{\mathbb
H}\varphi\right|^{\lambda_j} d\eta\right)^{\frac{1}{\lambda_j}}
\nonumber\\ &\lesssim
\left(R^{(1-\lambda_j)\gamma_j}R^{-2\lambda_j}\int_{\Sigma_R}d\eta\right)
^{\frac{1}{\lambda_j}}
\nonumber\\ &\lesssim R^{\frac{(1-\lambda_j)\gamma_j+Q}{\lambda_j}-2}.
\end{align}
Inserting (\ref{1-4}) into (\ref{1-1}), we compute
\begin{equation}\label{1-5}
 \begin{cases}
I_1^{1-\frac{m_1m_2}{pq}}\lesssim R^{\sigma_{I_1}},\\
J_1^{1-\frac{m_1m_2}{pq}}\lesssim R^{\sigma_{J_1}},
\end{cases}
\end{equation}
where
\begin{align}\label{I}
\sigma_{I_1}:=\frac{Q\left(pq-m_1m_2\right)}{pq}
-\frac{m_1(p\gamma_2+m_2\gamma_1)}{pq}-\frac{2(m_1+q)}{q},\\ \label{J}
\sigma_{J_1}:=\frac{Q\left(pq-m_1m_2\right)}{pq}
-\frac{m_2(q\gamma_1+m_1\gamma_2)}{pq}-\frac{2(m_2+p)}{p}.
\end{align}

Note that $\sigma_{I_1}\leq0$ or $\sigma_{J_1}\leq0$ if and only if (\ref{Q-1}) holds.
In the case $\sigma_{I_1}\leq0$, the integral $I_1$, increasing in $R$, is bounded uniformly with respect to $R$. Applying the monotone convergence theorem, we conclude that $|\eta|_{\mathbb H}^{\gamma_1}|v|^p$ is in $L^1\left(\mathbb H\right)$. Note that instead of the first inequality of (\ref{1-5}) we have, more precisely,
\begin{equation}
I_1\lesssim \left(\int_{\Sigma_R}|\eta|_{\mathbb H}^{\gamma_1}|v|^p\varphi^b d\eta\right)^{\frac{m_1m_2}{pq}}R^{\sigma_{I_1}}\lesssim \left(\int_{\Sigma_R}|\eta|_{\mathbb H}^{\gamma_1}|v|^p\varphi^b d\eta\right)^{\frac{m_1m_2}{pq}}.
\nonumber
\end{equation}
Finally, using the dominated convergence theorem, we obtain
$$\lim_{R\rightarrow+\infty}\int_{\Sigma_R}|\eta|_{\mathbb H}^{\gamma_1}|v|^p\varphi^b d\eta=0.$$
Therefore,
$$\lim_{R\rightarrow+\infty}I_1=0,$$
which implies that $v\equiv0$ in $\mathbb H$ and thus $u\equiv0$ in $\mathbb H$ via (\ref{es-5}).
The proof in the case $\sigma_{J_1}\leq0$ is analogous.

$\bullet$ Another is
$$
 \varphi(\eta):=\omega(\eta)\xi_k(\eta),\quad{\rm for\;\, fixed}\;\,  k\in\mathbb{N},$$
where\begin{flalign}
 && &
 \omega(\eta)=
 \begin{cases}
 1,\qquad\quad\;\, \rho<R,\\
 \left(\frac{\rho}{R}\right)^{-\delta},\quad \rho\geq R,
 \end{cases}
 &\nonumber
\end{flalign}
and
\begin{flalign}
 && &
 \xi_k(\eta)=
 \begin{cases}
 1,\qquad\quad\;\; 0\leq\rho\leq kR,\\
 2-\frac{\rho}{kR},\quad kR\leq\rho\leq2kR,\\
 0,\qquad\quad\;\; \rho\geq 2kR,
 \end{cases}
 &\nonumber
\end{flalign}
with $\rho:=\left|\eta\right|_{\mathbb H}$, and
\begin{equation}\label{1-6}
\delta>\frac{\Lambda+(1-\lambda_j)\gamma_j}{\lambda_j}-2,\quad{\rm for}\;\, j=1,2.
\end{equation}
Note that ${\rm supp}\left(\nabla_{\mathbb H}\varphi\right)$ is a subset of
\begin{equation}
\Sigma_R'=\left\{\eta=(x,y,\tau)\in\mathbb H\mid \eta\in B_{\mathbb H}(0,2kR)\backslash B_{\mathbb H}(0,R)\right\}.\nonumber
\end{equation}
According to \cite{BCC}, we have
$$\Delta_{\mathbb H}\rho=\frac{Q-1}{\rho}\Psi(\eta),
$$
where the function $\Psi$ is defined by
$$\Psi(\eta)=\frac{|x|^2+|y|^2}{\rho^2}=
\left|\nabla_{\mathbb H}\rho\right|^2,\quad {\rm for}\; \eta\neq0.$$
Note that $0\leq\Psi\leq1$, it is not difficult to check that
\begin{equation}\label{1-7}
 \left|\nabla_{\mathbb H}\omega\right|=\delta R^\delta\rho^{-(\delta+1)}
 \left|\nabla_{\mathbb H}\rho\right|
\lesssim  R^\delta\rho^{-(\delta+1)},
\end{equation}
\begin{equation}\label{1-8}
 \left|\Delta_{\mathbb H}\omega\right|=\delta R^\delta\rho^{-(\delta+2)}\left|\rho
\Delta_{\mathbb H}\rho-(\delta+1)
 \left|\nabla_{\mathbb H}\rho\right|^2\right|
\lesssim  R^\delta\rho^{-(\delta+2)},
\end{equation}
and
\begin{equation}\label{1-9}
\left|\nabla_{\mathbb H}\xi_k\right|=(kR)^{-1}\left|\nabla_{\mathbb H}\rho\right|\lesssim (kR)^{-1},
\end{equation}
\begin{equation}\label{1-10}
\left|\Delta_{\mathbb H}\xi_k\right|=(kR)^{-1}\left|\Delta_{\mathbb H}\rho\right|\lesssim (kR\rho)^{-1}.
\end{equation}
Note also that $\varphi(\eta)\uparrow\omega(\eta)$ as $k\rightarrow\infty$, and for every $\lambda\geq1$,
\begin{flalign}
 && &
 \left|\Delta_{\mathbb H}\varphi\right|^\lambda\lesssim \omega^\lambda\left|\Delta_{\mathbb H}\xi_k\right|^\lambda
+\xi_k^\lambda\left|\Delta_{\mathbb H}\omega\right|^\lambda+\left|\nabla_{\mathbb H}\omega\right|^\lambda\left|\nabla_{\mathbb H}\xi_k\right|^\lambda.
 &\nonumber
\end{flalign}
Then
\begin{align}\label{1-11}
\left(K_{1,j}\right)^{\lambda_j}=\;&\int_{\mathbb H}|\eta|_{\mathbb H}^{(1-\lambda_j)\gamma_j}\left|\Delta_{\mathbb H}\varphi\right|^{\lambda_j} d\eta
\nonumber\\ \lesssim\;&
\int_{B_{\mathbb H}(0,2kR)\backslash B_{\mathbb H}(0,kR)}
|\eta|_{\mathbb H}^{(1-\lambda_j)\gamma_j}\omega^{\lambda_j}\left|\Delta_{\mathbb H}\xi_k\right|^{\lambda_j} d\eta
\nonumber\\&
+\int_{B_{\mathbb H}(0,2kR)\backslash B_{\mathbb H}(0,R)}
|\eta|_{\mathbb H}^{(1-\lambda_j)\gamma_j}\xi_k^{\lambda_j}\left|\Delta_{\mathbb H}\omega\right|^{\lambda_j} d\eta
\nonumber\\&
+\int_{B_{\mathbb H}(0,2kR)\backslash B_{\mathbb H}(0,kR)}|\eta|_{\mathbb H}^{(1-\lambda_j)\gamma_j}\left|\nabla_{\mathbb H}\omega\right|^{\lambda_j}\left|\nabla_{\mathbb H}\xi_k\right|^{\lambda_j}
d\eta
\nonumber\\
=\;&M_1+M_2+M_3.
\end{align}
By (\ref{1-10}), we have
\begin{align}\label{1-12}
M_1&\lesssim(kR)^{-\lambda_j}\int_{B_{\mathbb H}(0,2kR)\backslash B_{\mathbb H}(0,kR)}\omega^{\lambda_j}\rho^{(1-\lambda_j)\gamma_j}\rho^{-\lambda_j} d\eta
\nonumber\\&\lesssim
(kR)^{-\lambda_j}\left(\sup_{B_{\mathbb H}(0,2kR)\backslash B_{\mathbb H}(0,kR)}\omega^{\lambda_j}\right)\int_{B_{\mathbb H}(0,2kR)\backslash B_{\mathbb H}(0,kR)}\rho^{(1-\lambda_j)\gamma_j-\lambda_j} d\eta
\nonumber\\&\lesssim(kR)^{-\lambda_j}
\left(\frac{kR}{R}\right)^{-\delta\lambda_j}\int_{kR}^{2kR}\rho^
{(1-\lambda_j)\gamma_j-\lambda_j+Q-1}d\rho
\nonumber\\&\lesssim
k^{(1-\lambda_j)\gamma_j-(\delta+2)\lambda_j+Q}
R^{(1-\lambda_j)\gamma_j-2\lambda_j+Q}.
\end{align}
By (\ref{1-8}), we have
\begin{align}\label{1-13}
M_2&\lesssim  R^{\delta\lambda_j}
\int_{B_{\mathbb H}(0,2kR)\backslash B_{\mathbb H}(0,R)}\rho^{(1-\lambda_j)\gamma_j}\rho^{-(\delta+2)\lambda_j}d\eta
\nonumber\\&\lesssim R^{\delta\lambda_j}
\int_{R}^{2kR}\rho^{(1-\lambda_j)\gamma_j-(\delta+2)\lambda_j+Q-1}d\rho
\nonumber\\&\lesssim\frac{(2k)^{(1-\lambda_j)\gamma_j-(\delta+2)\lambda_j+Q}-1}
{(1-\lambda_j)\gamma_j-(\delta+2)\lambda_j+Q} R^{(1-\lambda_j)\gamma_j-2\lambda_j+Q}.
\end{align}
By (\ref{1-7}) and (\ref{1-9}), we have
\begin{align}\label{1-14}
M_3&\lesssim(kR)^{-\lambda_j}R^{\delta\lambda_j}\int_{B_{\mathbb H}(0,2kR)\backslash B_{\mathbb H}(0,kR)}\rho^{(1-\lambda_j)\gamma_j}\rho^{-(\delta+1)\lambda_j}d\eta
\nonumber\\&\lesssim
(kR)^{-\lambda_j}R^{\delta\lambda_j}\int_{kR}^{2kR}\rho^{(1-\lambda_j)
\gamma_j-(\delta+1)\lambda_j+Q-1}d\rho
\nonumber\\&\lesssim
k^{(1-\lambda_j)\gamma_j-(\delta+2)\lambda_j+Q}
R^{(1-\lambda_j)\gamma_j-2\lambda_j+Q}.
\end{align}
A combination of (\ref{1-11}), (\ref{1-12}), (\ref{1-13}) and (\ref{1-14}) yields that
\begin{align}
K_{1,j}\lesssim \left(k^{(1-\lambda_j)\gamma_j-(\delta+2)\lambda_j+Q}
+ \frac{(2k)^{(1-\lambda_j)\gamma_j-(\delta+2)\lambda_j+Q}-1}
{(1-\lambda_j)\gamma_j-(\delta+2)\lambda_j+Q} \right)^{\frac{1}{\lambda_j}}R^{\frac{(1-\lambda_j)\gamma_j+Q}{\lambda_j}-2}.\nonumber
\end{align}
It follows from (\ref{Q-1}) and (\ref{1-6}) that $(1-\lambda_j)\gamma_j-(\delta+2)\lambda_j+Q<0$.
Thus, upon taking $k\rightarrow\infty$, we obtain
\begin{equation}\label{1-15}
K_{1,j}\lesssim R^{\frac{(1-\lambda_j)\gamma_j+Q}{\lambda_j}-2}.
\end{equation}
Inserting (\ref{1-15}) into (\ref{1-1}), we derive (\ref{1-5}) once again.
By substituting $\Sigma_R'$ for $\Sigma_R$, and arguing as we did as above, we can get the desired result.

$\bullet$ The third is
\begin{equation}
\varphi(\eta):=\frac{1}{n}\sum_{k=n+1}^{2n}\varphi_k(\eta),\quad{\rm for\;\, fixed}\;\,  n\in\mathbb{N},\nonumber
\end{equation}
where $\left\{\varphi_k\right\}_{k\in\mathbb{N}}$ is a sequence satisfying that each $\varphi_k$ is a Lipschitz function such that ${\rm supp}\left(\varphi_k\right)\subset B_{\mathbb H}(0,2^k)$, $\varphi_k=1$ in a neighborhood of $B_{\mathbb H}(0,2^{k-1})$, and
\begin{align}
\left|\Delta_{\mathbb H}\varphi_k\right|
 \begin{cases}
\lesssim \frac{1}{2^{2(k-1)}},\quad \eta\in B_{\mathbb H}(0,2^k)\backslash B_{\mathbb H}(0,2^{k-1}),\\
=0,\quad {\rm otherwise}.
 \end{cases}
 &\nonumber
\end{align}
Note that $\varphi=1$ on $B_{\mathbb H}(0,2^n)$, $\varphi=0$ outside $B_{\mathbb H}(0,2^{2n})$, and $0\leq\varphi\leq1$ on $\mathbb H$. Note also that for distinct $k$, ${\rm supp}\left(\Delta_{\mathbb H}\varphi_k\right)$ are disjoint with each other. Then we have for any $\lambda>0$,
$$\left|\Delta_{\mathbb H}\varphi\right|^\lambda=n^{-\lambda}
 \sum_{k=n+1}^{2n}\left|\Delta_{\mathbb H}\varphi_k\right|^\lambda.$$
It follows that
\begin{align}
\left(K_{1,j}\right)^{\lambda_j}&=n^{-\lambda_j}\int_{\mathbb H}
|\eta|_{\mathbb H}^{(1-\lambda_j)\gamma_j}\sum_{k=n+1}^{2n}\left|\Delta_{\mathbb H}\varphi_k\right|^{\lambda_j} d\eta
\nonumber\\&=
n^{-\lambda_j}\sum_{k=n+1}^{2n}
\int_{B_{\mathbb H}(0,2^k)\backslash B_{\mathbb H}(0,2^{k-1})}|\eta|_{\mathbb H}^{(1-\lambda_j)\gamma_j}\left|\Delta_{\mathbb H}\varphi_k\right|^{\lambda_j}  d\eta
\nonumber\\& \lesssim n^{-\lambda_j} \sum_{k=n+1}^{2n}2^{k(1-\lambda_j)\gamma_j}2^{-2k\lambda_j}2^{kQ}
\nonumber\\& \lesssim n^{-\lambda_j+1}2^{\left[(1-\lambda_j)\gamma_j-
2\lambda_j+Q\right]n}.\nonumber
\end{align}
Thus,
\begin{equation}\label{1-16}
K_{1,j}\lesssim n^{-1+\frac{1}{\lambda_j}}2^{\left(\frac{(1-\lambda_j)\gamma_j+Q}{\lambda_j}
-2\right)n}.
\end{equation}
Inserting (\ref{1-16}) into (\ref{1-1}), we compute
\begin{equation}\label{1-17}
 \begin{cases}
I_1^{1-\frac{m_1m_2}{pq}}\lesssim n^{-\frac{m_1(p+m_2)}{pq}}2^{n\sigma_{I_1}},\\
J_1^{1-\frac{m_1m_2}{pq}}\lesssim n^{-\frac{m_2(q+m_1)}{pq}}2^{n\sigma_{J_1}},
 \end{cases}
\end{equation}
where $\sigma_{I_1}$ and $\sigma_{J_1}$ are defined as in (\ref{I}) and (\ref{J}).

Note that $\sigma_{I_1}\leq0$ or $\sigma_{J_1}\leq0$ if and only if (\ref{Q-1}) holds.
In the case $\sigma_{I_1}\leq0$, by taking $n\rightarrow\infty$, we conclude from the first inequality of (\ref{1-17}) that
\begin{equation}
\lim_{n\rightarrow+\infty}I_1=0,\nonumber
\end{equation}
which implies that $v\equiv0$ in $\mathbb H$ and thus $u\equiv0$ in $\mathbb H$ via (\ref{es-5}).
The proof in the case $\sigma_{J_1}\leq0$ is analogous.
\end{proof}}

\section{Proof of Theorems \ref{thm2}-\ref{thm3}}\label{Sec4}

\begin{proof}[Proof of Theorem \ref{thm2}]
When $\Omega=\Omega_2$, we have for $j=1,2$,
$$K_{2,j}=\left(\int_{D_2}|\eta|_{\mathbb H}^{(1-\lambda_j)\gamma_j}x_1^\alpha\left|\Delta_{\mathbb
H}\varphi\right|^{\lambda_j} d\eta\right)^{\frac{1}{\lambda_j}},$$
and
$$L_{2,j}=\alpha\left(\int_{D_2}|\eta|_{\mathbb H}^{(1-\lambda_j)\gamma_j}x_1^{\alpha-\lambda_j}
\left|\nabla_{\mathbb H}x_1\right|^{\lambda_j}
\left|\nabla_{\mathbb H}\varphi\right|^{\lambda_j}
d\eta\right)^{\frac{1}{\lambda_j}}.$$
Then (\ref{est-IJ}) reads as
\begin{equation}\label{2-1}
 \begin{cases}
I_2^{1-\frac{m_1m_2}{pq}}\lesssim\left(K_{2,1}+ L_{2,1}\right)^{\frac{m_1}{q}}\left(K_{2,2}+  L_{2,2}\right),\\
J_2^{1-\frac{m_1m_2}{pq}}\lesssim \left(K_{2,1}+ L_{2,1}\right)\left(K_{2,2}+ L_{2,2}\right)
^{\frac{m_2}{p}}.
 \end{cases}
\end{equation}
Let us use the first test function $\varphi(\eta)$ defined by (\ref{1-2}). In view of (\ref{1-3}), we obtain
\begin{equation}\label{2-2}
K_{2,j}\lesssim \left(R^{(1-\lambda_j)\gamma_j}R^\alpha R^{-2\lambda_j}\int_{\Sigma_R}d\eta\right)^{\frac{1}{\lambda_j}}
\lesssim R^{\frac{{(1-\lambda_j)\gamma_j}+\alpha+Q}{\lambda_j}-2}.
\end{equation}
Also, we calculate
\begin{align}\label{2-3}
\left|\nabla_{\mathbb
H}\varphi\right|&=\left(\sum_{i=1}^N\left|
X_i\varphi\right|^2+\left|Y_i\varphi\right|^2\right)^{\frac{1}{2}}
\nonumber\\ &
=4R^{-4}\left|\phi'(r)\right|\left[|x|^6+|y|^6+\tau^2(|x|^2+|y|^2)
+2\tau x\cdot y(|x|^2-|y|^2)\right]^{\frac{1}{2}}
\nonumber\\
 &\lesssim R^{-1}.
\end{align}
Thus,
\begin{equation}\label{2-4}
L_{2,j}\lesssim \left(R^{(1-\lambda_j)\gamma_j}R^{\alpha-\lambda_j} R^{-\lambda_j}\int_{\Sigma_R}d\eta\right)^{\frac{1}{\lambda_j}}
\lesssim R^{\frac{(1-\lambda_j)\gamma_j+\alpha+Q}{\lambda_j}-2}.
\end{equation}
Inserting (\ref{2-2}) and (\ref{2-4}) into (\ref{2-1}), we compute
\begin{equation}\label{2-5}
 \begin{cases}
I_2^{1-\frac{m_1m_2}{pq}}\lesssim R^{\sigma_{I_2}},\\
J_2^{1-\frac{m_1m_2}{pq}}\lesssim R^{\sigma_{J_2}},
 \end{cases}
\end{equation}
where
\begin{align}
\sigma_{I_2}:=\frac{(\alpha+Q)\left(pq-m_1m_2\right)}{pq}
-\frac{m_1(p\gamma_2+m_2\gamma_1)}{pq}-\frac{2(m_1+q)}{q},\nonumber\\
\sigma_{J_2}:=\frac{(\alpha+Q)\left(pq-m_1m_2\right)}{pq}
-\frac{m_2(q\gamma_1+m_1\gamma_2)}{pq}-\frac{2(m_2+p)}{p}.\nonumber
\end{align}

Note that $\sigma_{I_2}\leq0$ or $\sigma_{J_2}\leq0$ if and only if (\ref{Q-2}) holds.
In the case $\sigma_{I_2}\leq0$, the integral $I_2$, increasing in $R$, is bounded uniformly with respect to $R$. Applying the monotone convergence theorem, we conclude that $|\eta|_{\mathbb H}^{\gamma_1}|v|^px_1^\alpha$ is in $L^1\left(\Omega_2\right)$. Note that instead of the first inequality of (\ref{2-5}) we have, more precisely,
\begin{equation}
I_2\lesssim \left(\int_{\Omega_2\cap \Sigma_R}|\eta|_{\mathbb H}^{\gamma_1}|v|^px_1^\alpha \varphi^b d\eta\right)^{\frac{m_1m_2}{pq}}R^{\sigma_{I_2}}\lesssim \left(\int_{\Omega_2\cap \Sigma_R}|\eta|_{\mathbb H}^{\gamma_1}|v|^px_1^\alpha \varphi^b d\eta\right)^{\frac{m_1m_2}{pq}}.
\nonumber
\end{equation}
Finally, using the dominated convergence theorem, we obtain
$$\lim_{R\rightarrow+\infty}\int_{\Omega_2\cap \Sigma_R}|\eta|_{\mathbb H}^{\gamma_1}|v|^px_1^\alpha \varphi^b d\eta=0.$$
Therefore,
$$\lim_{R\rightarrow+\infty}I_2=0,$$
which implies that $v\equiv0$ in $\Omega_2$ and thus $u\equiv0$ in $\Omega_2$ via (\ref{es-5}).
The proof in the case $\sigma_{J_2}\leq0$ is analogous.
\end{proof}

\begin{proof}[Proof of Theorem \ref{thm3}] When $\Omega=\Omega_3$, we have for $j=1,2$,
$$K_{3,j}=\left(\int_{D_3}|\eta|_{\mathbb H}^{(1-\lambda_j)\gamma_j}\tau^\alpha\left|\Delta_{\mathbb
H}\varphi\right|^{\lambda_j} d\eta\right)^{\frac{1}{\lambda_j}},$$
and
$$L_{3,j}=\alpha\left(\int_{D_3}|\eta|_{\mathbb H}^{(1-\lambda_j)\gamma_j}\tau^{\alpha-\lambda_j}
\left|\nabla_{\mathbb H}\tau\right|^{\lambda_j}
\left|\nabla_{\mathbb H}\varphi\right|^{\lambda_j}
d\eta\right)^{\frac{1}{\lambda_j}}.$$
Then (\ref{est-IJ}) reads as
\begin{equation}\label{2-6}
 \begin{cases}
I_3^{1-\frac{m_1m_2}{pq}}\lesssim\left(K_{3,1}+ L_{3,1}\right)^{\frac{m_1}{q}}\left(K_{3,2}+  L_{3,2}\right),\\
J_3^{1-\frac{m_1m_2}{pq}}\lesssim \left(K_{3,1}+ L_{3,1}\right)\left(K_{3,2}+ L_{3,2}\right)
^{\frac{m_2}{p}}.
 \end{cases}
\end{equation}
We also use the first test function $\varphi(\eta)$ defined by (\ref{1-2}). By (\ref{1-3}), we obtain
\begin{equation}\label{2-7}
K_{3,j}\lesssim \left(R^{(1-\lambda_j)\gamma_j}R^{2\alpha} R^{-2\lambda_j}\int_{\Sigma_R}d\eta\right)^{\frac{1}{\lambda_j}}
\lesssim R^{\frac{(1-\lambda_j)\gamma_j+2\alpha+Q}{\lambda_j}-2}.
\end{equation}
By (\ref{2-3}), we obtain
\begin{equation}\label{2-8}
L_{3,j}\lesssim \left(R^{(1-\lambda_j)\gamma_j}R^{2(\alpha-\lambda_j)} R^{\lambda_j} R^{-\lambda_j}\int_{\Sigma_R}d\eta\right)^{\frac{1}{\lambda_j}}
\lesssim R^{\frac{(1-\lambda_j)\gamma_j+2\alpha+Q}{\lambda_j}-2}.
\end{equation}
Inserting (\ref{2-7}) and (\ref{2-8}) into (\ref{2-6}), we compute
\begin{equation}\label{2-9}
 \begin{cases}
I_3^{1-\frac{m_1m_2}{pq}}\lesssim R^{\sigma_{I_3}},\\
J_3^{1-\frac{m_1m_2}{pq}}\lesssim R^{\sigma_{J_3}},
 \end{cases}
\end{equation}
where
\begin{align}
\sigma_{I_3}:=\frac{(2\alpha+Q)\left(pq-m_1m_2\right)}{pq}
-\frac{m_1(p\gamma_2+m_2\gamma_1)}{pq}-\frac{2(m_1+q)}{q},\nonumber\\
\sigma_{J_3}:=\frac{(2\alpha+Q)\left(pq-m_1m_2\right)}{pq}
-\frac{m_2(q\gamma_1+m_1\gamma_2)}{pq}-\frac{2(m_2+p)}{p}.\nonumber
\end{align}

Note that $\sigma_{I_3}\leq0$ or $\sigma_{J_3}\leq0$ if and only if (\ref{Q-3}) holds.
In the case $\sigma_{I_3}\leq0$, the integral $I_3$, increasing in $R$, is bounded uniformly with respect to $R$. Applying the monotone convergence theorem, we conclude that $|\eta|_{\mathbb H}^{\gamma_1}|v|^p\tau^\alpha$ is in $L^1\left(\Omega_3\right)$. Note that instead of the first inequality of (\ref{2-9}) we have, more precisely,
\begin{equation}
I_3\lesssim \left(\int_{\Omega_3\cap \Sigma_R}|\eta|_{\mathbb H}^{\gamma_1}|v|^p\tau^\alpha \varphi^b d\eta\right)^{\frac{m_1m_2}{pq}}R^{\sigma_{I_3}}\lesssim \left(\int_{\Omega_3\cap \Sigma_R}|\eta|_{\mathbb H}^{\gamma_1}|v|^p\tau^\alpha \varphi^b d\eta\right)^{\frac{m_1m_2}{pq}}.
\nonumber
\end{equation}
Finally, using the dominated convergence theorem, we obtain
$$\lim_{R\rightarrow+\infty}\int_{\Omega_3\cap \Sigma_R}|\eta|_{\mathbb H}^{\gamma_1}|v|^p\tau^\alpha \varphi^b d\eta=0.$$
Therefore,
$$\lim_{R\rightarrow+\infty}I_3=0,$$
which implies that $v\equiv0$ in $\Omega_3$ and thus $u\equiv0$ in $\Omega_3$ via (\ref{es-5}).
The proof in the case $\sigma_{J_3}\leq0$ is analogous.
\end{proof}

%\section*{Acknowledgments}

\bibliographystyle{model1a-num-names}

\end{document}